\documentclass[12pt]{article}

\usepackage{a4wide}
\usepackage{times,amsmath,amsfonts,amssymb}
\usepackage{graphicx}
\usepackage{verbatim,enumerate,subfig}

\usepackage{color}
\newcommand{\old}[1]{}


\newtheorem{theorem}{Theorem}[section]

\newtheorem{proposition}[theorem]{Proposition}
\newtheorem{lemma}[theorem]{Lemma}
\newtheorem{corollary}[theorem]{Corollary}

\newenvironment{proof}{\noindent {\it Proof.~~}\ }{\rule{1mm}{2mm}\medskip}

\DeclareMathOperator{\td}{td}
\DeclareMathOperator{\tw}{tw}
\DeclareMathOperator{\diam}{diam}

\DeclareMathOperator{\w}{w}
\DeclareMathOperator{\E}{\mathbb{E}}
\DeclareMathOperator{\Var}{\mathbf{Var}}

\begin{document}



\title{On the tree--depth of random graphs
\footnote{
Work partially supported by the Catalan Research Council under
grant 2009SGR01387 and the Spanish Council under project
MTM2008-06620-C03-01. The second author wants to thank the FPU grant from the
Ministerio de Educaci\'on de Espa\~{n}a.}}


\author{G. Perarnau and O.Serra}

\maketitle
\begin{abstract}
The tree--depth is a parameter introduced under several names as a measure of sparsity of a
graph. We compute asymptotic values of the tree--depth of random graphs.
For dense graphs, $p\gg n^{-1}$, the tree--depth of a random graph $G$ is a.a.s. $\td
(G)=n-O(\sqrt{n/p})$.
Random graphs with $p=c/n$, have a.a.s. linear tree--depth when $c>1$, the tree--depth is $\Theta
(\log{n})$ when $c=1$ and
$\Theta (\log\log n)$ for $c<1$. The result for $c>1$ is derived from the computation of tree--width
and provides a more direct proof of a conjecture by Gao on the linearity of tree--width recently
proved by Lee, Lee and Oum~\cite{llo1}. We also show that, for $c=1$, every width parameter is
a.a.s. constant, and that random regular graphs have linear tree--depth.
\end{abstract}
\section{Introduction}\label{sec:intro}

An elimination tree of a graph $G$ is a rooted tree on the set of vertices such that there are no
edges in $G$ between vertices in different branches of the tree. The natural elimination scheme
provided by this tree is used in many graph algorithmic
problems where two non adjacent subsets of vertices can be managed independently. One good example
is the Cholesky decomposition of symmetric matrices (see~\cite{s1982,d1989,l1990,pt2004}).
Given an elimination tree, a distributed algorithm can be designed which
takes care of disjoint subsets of vertices in different parallel processors.
Starting by the furthest level from the root, it proceeds by
exposing at each step the vertices at a given depth. Then the algorithm runs using the
information coming from its children subtrees, which have been computed in previous steps. Observe
that the The vertices treated in different processors are independent and thus, there is no need to
share information among them.
Depending on the graph, this distributed algorithm can be more convenient that the sequential one.
In fact, its complexity is given by the height of
the
elimination tree used by the algorithm. Therefore, it is interesting to study the minimum
height of an elimination tree of $G$. This natural parameter has been introduced under numerous
names in the literature: rank function~\cite{ns1}, vertex
ranking number (or ordered coloring)~\cite{dkkm1}, weak coloring number~\cite{ky1}, but its study
was systematically undertaken by 
Ne{\v{s}}et{\v{r}}il and Ossona de Mendez under the name of \emph{tree--depth}.

%
%

The tree--depth $\td(G)$ of a graph $G$ is a measure introduced by
Ne{\v{s}}et{\v{r}}il and Ossona de Mendez~\cite{no1} in the context
of bounded expansion classes. The notion of the tree--depth is closely connected to
the tree--width. The tree--width of a graph can be seen as a measure of closeness to a tree,
while the tree--depth takes also into account the
diameter of the tree (see Section~\ref{sec:treedepth} for a precise definition and~\cite{no2011}
for an extensive account of the meaning and applications of this parameter.)


Randomly generated graphs have been widely used as benchmarks for testing distributed algorithms and
therefore it is useful to characterize the elimination tree of such graphs. The main goal of this
paper is to give asymptotically tight values for the minimum height of an elimination tree of a
random graph. 

We consider the Erd{\H{o}}s-R{\'e}nyi model $\mathcal{G}(n,p)$ for random
graph. A \emph{random graph} $G\in \mathcal{G}(n,p)$ has $n$ vertices and
every pair of vertices is chosen independently to be an edge with
probability $p$.

For any graph property $\mathcal{P}$, we say that $\mathcal{P}$ holds {\it asymptotically almost
sure (a.a.s.)}
in $G\in\mathcal{G}(n,p)$, if
\begin{equation*}
 \displaystyle\lim_{n\rightarrow \infty} \Pr(G \mbox{ satisfies } \mathcal{P})= 1
\end{equation*}

Throughout the paper, all the results and statements concerning
random graphs must be understood in the asymptotically almost sure
sense. We will occasionally make use of the $\mathcal{G}(n,m)$ model of random
graphs, where a labeled graph with $n$ vertices and $m$ edges is chosen with
the uniform distribution. As it is well--known, the two models are
closely connected and most of the statements can be transferred from
one model to the other one.

Our first result gives the value of tree--depth for dense random
graphs.
\begin{theorem}
\label{the:dense}
 Let $G\in \mathcal{G}(n,p)$ be a random graph with $p\gg n^{-1}$, then $G$ satisfies
a.a.s.
\begin{equation*}
 \td(G)=n-O\left(\sqrt{\frac np}\right).
\end{equation*}
\end{theorem}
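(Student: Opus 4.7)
Set $k := n - \td(G)$; I aim to show $k = O(\sqrt{n/p})$ asymptotically almost surely. The plan is to analyze the structure of an arbitrary elimination tree of $G$ and then apply a first-moment argument.

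First, I fix an elimination tree $T$ of $G$ attaining $\td(G) = n - k$, and fix a longest root-to-leaf path $P = v_1 v_2 \cdots v_{n-k}$ in $T$, calling the remaining $k$ vertices the \emph{extras}. By the defining property of an elimination tree, every edge of $G$ joins vertices in ancestor--descendant relation in $T$. Thus, if an extra $u$ descends from $v_i$ but not from $v_{i+1}$, then $u$ is non-adjacent in $G$ to each of $v_{i+1}, \ldots, v_{n-k}$ and to every other extra lying outside the subtree rooted at $v_i$. A simple combinatorial optimization -- maximizing $\sum_v \operatorname{depth}_T(v)$ over rooted trees on $n$ vertices of height $n-k$ subject to $n_1 = 1$, $n_i \ge 1$ -- shows that any such $T$ has at least $\binom{k+1}{2}$ pairs of non-ancestor--descendant vertices. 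The minimum is attained by the \emph{flat-bottom} configuration in which the deepest level of $T$ consists of $k+1$ siblings, namely $v_{n-k}$ together with the $k$ extras, all children of $v_{n-k-1}$. In that configuration the $k+1$ siblings form an independent set of size $k+1$ in $G$, yielding in particular $\td(G) \ge n - \alpha(G) + 1$.

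I then apply a first-moment bound. The probability that $G \in \mathcal{G}(n,p)$ admits an elimination tree of height $\le n-k$ is at most a sum over candidate tree shapes of (number of embeddings) times $(1-p)^{\binom{k+1}{2}}$, dominated by the flat-bottom contribution $\binom{n}{k+1}(1-p)^{\binom{k+1}{2}}$. Choosing $k$ to be a sufficiently large multiple of $\sqrt{n/p}$ makes this tend to zero. In the sparsest regime $p = c/n$ with $c > 1$, where $\alpha(G)$ is linear and the flat-bottom bound is useless, one may alternatively invoke $\td(G) \ge \tw(G) + 1$ combined with the Lee--Lee--Oum linear lower bound on the treewidth of $G(n,c/n)$ mentioned in the introduction.

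The main obstacle is the tightness of the union bound. A naive count over all labeled elimination trees of height $\le n-k$ (using for instance the Cayley-type bound $n^{n-1}$) yields only $k = O(\sqrt{n \log n / p})$, off by a factor $\sqrt{\log n}$. To pin down the correct order $\sqrt{n/p}$ one must carefully isolate the flat-bottom-like shapes and exploit the structural constraint that the forced non-edges form an independent set of size $k+1$ (so that one counts embeddings of that pattern rather than trees), or alternatively route the argument through the separator-based lower bound on $\tw(G(n,p))$, which avoids enumerating elimination trees altogether.
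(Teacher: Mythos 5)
Your route is genuinely different from the paper's (the paper never enumerates elimination trees: it lower-bounds $\tw(G)$ via Kloks' balanced-partition lemma and then uses $\td(G)\ge\tw(G)$), but as written it has a genuine gap in each of its two halves. First, the inequality $\td(G)\ge n-\alpha(G)+1$ does not follow from your optimization and is in fact false in general: for $P_7$ one has $\td(P_7)=\lfloor\log 7\rfloor+1=3$ while $n-\alpha(G)+1=7-4+1=4$. Knowing that the flat-bottom shape \emph{minimizes} the number of non-comparable pairs only tells you that every elimination forest of height $n-k$ forces at least $\binom{k+1}{2}$ non-edges of $G$; it does not force those non-edges to sit on a common set of $k+1$ vertices, because $G$ may realize a non-flat-bottom tree without containing any independent set of size $k+1$. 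So the ``in particular'' step is invalid, and with it the assertion that the union bound is ``dominated by the flat-bottom contribution.''

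Second, and more importantly, the first-moment bound is never actually carried out. You correctly diagnose that summing $(1-p)^{\binom{k+1}{2}}$ over all $\approx n^{n-1}$ labeled trees of height at most $n-k$ loses a $\sqrt{\log n}$ factor, and you state that one must either ``carefully isolate the flat-bottom-like shapes'' or ``route the argument through the separator-based lower bound on $\tw$'' --- but neither is done, and the first is precisely the hard part of your approach: you would need to stratify tree shapes by their number of non-comparable pairs and show that the count of shapes in each stratum is beaten by the corresponding extra factors of $(1-p)$. The second option is exactly what the paper does, and it is cleaner: Kloks' lemma converts $\tw(G)\le k$ into a balanced partition $(A,S,B)$ with $|A|\,|B|\ge\frac{2}{9}(n-k-1)^2$ forced non-edges, the union bound runs over only $3^n$ tripartitions rather than $n^{\Theta(n)}$ trees, and $3^n(1-p)^{2f^2n^2/9}\to 0$ once $f$ is a suitable multiple of $\sqrt{1/(np)}$, giving $\td(G)\ge\tw(G)> (1-f)n=n-O(\sqrt{n/p})$ with no logarithmic loss. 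Your fallback to the Lee--Lee--Oum treewidth bound for $p=c/n$ is also outside the theorem's hypothesis $p\gg n^{-1}$ and becomes unnecessary once the separator argument is in place.
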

Theorem~\ref{the:dense} implies that random graphs with a superlinear number of edges have a
tree structure
similar to the one of the complete graph. Actually our proof of Theorem~\ref{the:dense} provides
the same
result for tree--width. To our knowledge, the tree--width of a dense random graph has not been
studied before.

Ne{\v{s}}et{\v{r}}il and Ossona de Mendez showed that a sparse random graph $G(n,c/n)$ belongs a.a.s. to a bounded expansion class (see~\cite[Theorem 13.4]{no2011}). Our main result is the computation of the tree--depth of sparse random graphs.

\begin{theorem}
\label{the:sparse}
 Let $G\in \mathcal{G}(n,p)$ be a random graph with $p=\frac{c}{n}$, with $c>0$,
\begin{enumerate}
 \item[$(1)$] if $c<1$, then a.a.s. $\td(G)=\Theta (\log\log n)$
 \item[$(2)$] if $c=1$, then a.a.s. $\td(G)=\Theta (\log n)$
 \item[$(3)$] if $c>1$, then a.a.s. $\td(G)=\Theta (n)$
\end{enumerate}
\end{theorem}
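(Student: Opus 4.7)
The plan is to handle the three regimes separately, establishing matching upper and lower bounds in each.

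For the upper bounds I would rely on classical component-size estimates for $G(n,c/n)$: when $c<1$, every connected component has $O(\log n)$ vertices a.a.s.; when $c=1$, every connected component has $O(n^{2/3})$ vertices a.a.s.; when $c>1$, the trivial bound $\td(G)\leq n$ is enough. In the first two regimes, components are a.a.s.\ either trees or have $O(1)$ excess edges. A centroid-decomposition argument shows that any tree on $m$ vertices admits an elimination tree of depth $\lceil\log_2(m+1)\rceil$, and removing a constant number of cycle-breaking vertices increases tree-depth by at most an additive constant. This gives $\td(G)\leq O(\log\log n)$ for $c<1$ and $\td(G)\leq O(\log n)$ for $c=1$.

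For the lower bounds in regimes (1) and (2), I would use the monotonicity relation $\td(G)\geq \lceil\log_2(k+1)\rceil$ whenever $G$ contains a path on $k$ vertices. For $c<1$, the largest component is essentially a conditioned Galton--Watson tree of size $\Theta(\log n)$, whose diameter is a.a.s.\ of order $(\log n)^{\Omega(1)}$, producing a path long enough to force $\td(G)=\Omega(\log\log n)$. For $c=1$, the diameter of the largest component is known to be of order $n^{1/3}$ a.a.s., yielding a path of that length and hence $\td(G)=\Omega(\log n)$.

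The main obstacle is the lower bound in regime (3), since a path-length argument gives only $\Omega(\log n)$. Here I would exploit the general structural inequality $\td(G)\geq \tw(G)+1$: from an elimination tree of depth $d$, taking each root-to-node path as a bag yields a tree decomposition of width at most $d-1$. So it suffices to show $\tw(G)=\Omega(n)$ directly. I would extract a linear-sized expander minor. The 2-core of the giant component has $\Theta(n)$ vertices a.a.s.; contracting maximal paths of degree-$2$ vertices gives its kernel, a random multigraph of linear order with minimum degree at least $3$ and bounded, Poisson-like degree distribution. A first-moment union bound over potential small balanced vertex separators shows the kernel has constant vertex expansion a.a.s.\ Since every balanced separator in an expander on $m$ vertices has linear size, one gets $\tw(\text{kernel})=\Omega(n)$; and since the kernel is a minor of $G$, $\tw(G)\geq \tw(\text{kernel})=\Omega(n)$, giving $\td(G)=\Omega(n)$.

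The principal technical difficulty is the kernel-expansion estimate for regime (3), which requires careful control of the conditional degree sequence of the 2-core and a union bound over potential small cuts. The remaining ingredients -- the component-size estimates, centroid decompositions, and diameter-based path lower bounds -- are either standard or already available in the literature on random graphs. As a by-product, the argument yields a direct proof that $\tw(G)=\Theta(n)$ for $c>1$, recovering Gao's conjecture without appealing to~\cite{llo1}.
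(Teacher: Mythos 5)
Your overall architecture coincides with the paper's: component-structure upper bounds in regimes $(1)$ and $(2)$, path/diameter lower bounds via $\td(G)\geq \log d$ for $c\leq 1$, and a reduction to linear tree--width via balanced separators and expansion for $c>1$. The upper bounds and the $c=1$ lower bound (via the Nachmias--Peres theorem~\cite{np1} that the largest component has diameter $\Theta(n^{1/3})$) are essentially identical to what the paper does. One smaller difference: for $c<1$ you lower-bound the diameter of the \emph{single} largest component, whereas the paper explicitly warns that the diameter of one random labelled tree is not concentrated (its variance is $\Theta(k)$, and a single tree can even have bounded diameter) and instead applies Chebyshev to the \emph{average} diameter over the $\omega(1)$ tree components of order $\log n$. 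Your route can be made to work, but not from the first two moments alone: you need the distributional form of the R\'enyi--Szekeres result~\cite{rs1}, which gives $\Pr\left(\diam \leq \epsilon_k\sqrt{k}\right)\to 0$ for any $\epsilon_k\to 0$, so that the largest tree component of order $k=\Theta(\log n)$ a.a.s.\ has diameter at least $(\log n)^{1/4}$, which indeed suffices for $\Omega(\log\log n)$. As written, ``a.a.s.\ of order $(\log n)^{\Omega(1)}$'' is asserted rather than derived.

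The genuinely different, and riskiest, element is regime $(3)$. The paper does not prove expansion itself: it quotes the theorem of Benjamini, Kozma and Wormald~\cite{bkw1} providing a linear-order $\alpha$--edge-expander subgraph $H$ of the giant component, applies Kloks' balanced-partition lemma to $H$, and then still needs a separate union-bound computation to convert ``$S$ meets $\beta n$ edges'' into ``$|S|=\Omega(n)$'' --- necessary because $\mathcal{G}(n,c/n)$ has unbounded maximum degree, so edge expansion does not directly give vertex expansion. You propose instead to extract the kernel of the 2-core and to prove constant \emph{vertex} expansion of that random multigraph by a first-moment bound over small separators. This is the crux of your proof and it is precisely the step you leave unproved. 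Carrying it out requires contiguity of the kernel with a configuration-model multigraph of minimum degree $3$ together with control of the Poisson-like upper tail of its degree sequence (the same unbounded-degree issue resurfaces there). The statement is true and in the spirit of known results, but establishing it is comparable in depth to the result of~\cite{bkw1} that both the paper and~\cite{llo1} deliberately cite rather than reprove. So your argument is complete only modulo that kernel-expansion lemma; the remaining steps, including the reduction $\td(G)\geq\tw(G)+1$ and the minor-monotonicity of tree--width, are sound, and your closing remark that the argument recovers Gao's conjecture matches the paper's.
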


The last part of this theorem is closely related to a conjecture of Gao
announced in~\cite{g3} on the linear behaviour of tree--width for
random graphs with $c=2$, inspired by the results of Kloks in~\cite{k1}. This conjecture has been
recently proved by Lee, Lee and Oum~\cite{llo1}. They show that the tree--width is linear
for any $c>1$ as a corollary of their result on the rankwidth of random graphs. Here we give a proof
of
Theorem~\ref{the:sparse}$.(3)$ which also provides a proof of Gao
conjecture, giving an explicit lower bound on the tree--width. Our
proof uses, as the one in~\cite{llo1}, the same deep result of
Benjamini, Kozma and Wormald~\cite{bkw1} on the existence of a linear order
expander in a sparse random graph for $c>1$.

The paper is organized as follows. In Section~\ref{sec:treedepth}, we
define the notion of tree--depth and give some useful results
concerning this parameter. Section~\ref{sec:dense} contains the
proof of Theorem~\ref{the:dense}, which uses the relation connecting
tree--width with balanced partitions. Finally
Theorem~\ref{the:sparse} will be proved in Section~\ref{sec:sparse}.
For $c<1$ the result follows from the fact that the random graph is
a collection of trees and unicyclic graphs of logarithmic order,
which gives the upper bound, and that there is one of these components
 with large diameter with respect to its order, providing the lower bound.
 For $c=1$ we show that the giant component in the random graph has just a
constant number of additional edges exceeding the order of a tree,
which gives the upper bound, and rely on a result of Nachmias and
Peres~\cite{np1} on the concentration of the diameter of the giant
component to obtain the lower bound. Finally, as we have already
mentioned, for $c>1$ the result follows readily from the existence
of an expander of linear order in a sparse random graph for $c>1$, a
fact proved in Benjamini, Kozma and Wormald~\cite{bkw1}.

\section{Tree--depth}\label{sec:treedepth}

Let $T$ be a rooted tree. The \emph{closure} of $T$ is the graph
that has the same set of vertices and an edge between every pair of
vertices such that one is an ancestor of the other in the rooted
tree. A \emph{rooted forest} is a disjoint union of rooted trees. 
 The height of a rooted forest is the maximum height of its trees.
The closure of a rooted forest is the disjoint union of the closures of its rooted trees.

The tree (forest) $T$ is called an elimination tree (forest) for $G$ if $G$ is a subgraph of its
closure.
The \emph{tree--depth} of a graph $G$ is defined to be the
minimum height of an elimination forest of $G$.
Some examples are shown in Figure~\ref{fig:treedepth}.
\begin{figure}[ht]
 \begin{center}
 \includegraphics[width=0.9\textwidth]{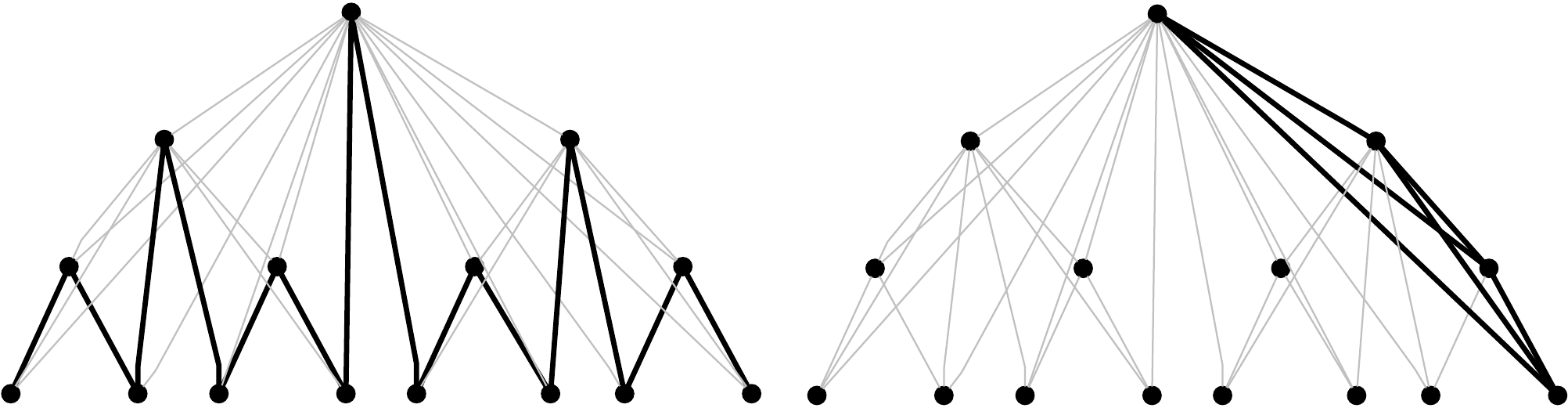}
 \end{center}
 \caption{The path with $15$ vertices and the complete graph $K_4$ have tree--depth $4$}
 \label{fig:treedepth}
\end{figure}

If $G$ has connected components
$H_1,\dots,H_s$, then,
\begin{equation}
 \label{eq:cc}
 \td(G)= \displaystyle\max_{0<i\leq s}\td(H_i)
\end{equation}
by the definition of the height of a forest.

It is clear from the definition that the tree--depth of a graph $G$ with $n$ vertices satisfies $\td
(G)\le n$, and that the equality is satisfied only for the complete graph $K_n$. Note that the
following inequality holds,
\begin{equation}
\label{eq:decreasing}
 \td(G\backslash v)\geq \td(G) -1.
\end{equation}

For any tree $T$, it can be checked by induction that,
\begin{equation}
 \label{eq:trees}
 \td(T)\leq \lfloor \log n\rfloor+1,
\end{equation}
and that the equality holds if and only if $T$ is a path $P_n$ with $n$ vertices,
 \begin{equation}
\label{eq:paths}
 \td(P_n)= \lfloor\log n\rfloor+1.
 \end{equation}
Henceforth by $\log$ we denote the logarithm in base two while we will use $\ln$ for the natural
logarithm.

As a simple consequence of the previous equation and the fact that the tree--depth is
monotonically increasing under the subgraph ordering, we have that if $G$ contains a path of length
$t$, then $\td(G)\geq \log{t}$. In particular, if $G$ has diameter $d$,
 \begin{equation}
\label{eq:diam}
 \td(G)\geq \log{d}
 \end{equation}

The following inequalities relate the tree--width and tree--depth of
a graph (see e.g.~\cite{bghk95})
\begin{equation}
\label{eq:tdtw}
 \tw(G)\leq \td(G) \leq \tw(G)(\log n+1).
\end{equation}
 Note that there are classes of graphs that have bounded tree--width but
unbounded tree--depth, for example trees. On the other hand, if a class of graphs has bounded
tree--depth, then it also has bounded tree--width.

Some of our results use the relation~\eqref{eq:tdtw} between tree--depth and tree--with.
Bounds on tree--width can be obtained through its connection with balanced separators.
Kloks~\cite{k1} defines a partition $(A,S,B)$ of a graph $G$ with $n$ vertices to be a
{\it balanced $k$--partition} if $|S|=k+1$ and
\begin{equation}\label{eq:ksep}
\frac{1}{3}\left( n-k-1\right) \leq |A|,|B| \leq \frac{2}{3}\left( n-k-1\right).
\end{equation}
 He states the following result connecting balanced partitions and tree--width.

\begin{lemma}[Kloks~\cite{k1}]
 \label{lem:balanced}
Let $G$ be a graph with $n$ vertices and $\tw(G)\leq k$,
$k\leq n-4$. Then $G$ has a balanced $k$-partition.
\end{lemma}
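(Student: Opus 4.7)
The plan is to extract the balanced partition directly from a tree decomposition of $G$ of width at most $k$: find a ``centroid'' node of the decomposition tree, use the corresponding bag as the separator $S$, and then regroup the pieces hanging off the centroid into two balanced parts $A$ and $B$.

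First I would fix a tree decomposition $(T,\{X_t\}_{t\in V(T)})$ of $G$ with $|X_t|\le k+1$ for every $t\in V(T)$. For each edge $tt'$ of $T$, let $T_{t'}^{t}$ be the connected component of $T-t$ containing $t'$, and set
\[
n_{t,t'}=\left|\bigcup_{u\in T^{t}_{t'}}X_u \setminus X_t\right|.
\]
By the properties of a tree decomposition, as $t'$ ranges over the neighbors of $t$ these sets partition $V(G)\setminus X_t$, so $\sum_{t'\sim t}n_{t,t'}=n-|X_t|$.

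Next I would locate a centroid node $t^*$ satisfying $n_{t^*,t'}\le(n-|X_{t^*}|)/2$ for every neighbor $t'$ of $t^*$. A standard pivoting argument produces such a node: while some neighbor $t'$ of the current $t$ violates the condition, move to $t'$; the size of the ``heavy'' branch serves as a monovariant that strictly decreases, so the walk terminates. Set $S_0=X_{t^*}$ (so $|S_0|\le k+1$), and let $V_1,\ldots,V_r$ be the vertex sets arising from the components of $T-t^*$; these partition $V(G)\setminus S_0$ and satisfy $|V_i|\le (n-|S_0|)/2$ by the centroid property.

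I would then build $A$ and $B$ by a short case analysis. If some $V_i$ has size at least $(n-|S_0|)/3$, then by the centroid bound $|V_i|\le(n-|S_0|)/2\le 2(n-|S_0|)/3$, so we may take $A=V_i$ and let $B$ be the union of the remaining $V_j$'s. Otherwise every $V_i$ has size below $(n-|S_0|)/3$, and we add them greedily to $A$ until $|A|$ first meets $(n-|S_0|)/3$; since the last piece has size less than $(n-|S_0|)/3$, we end with $(n-|S_0|)/3\le|A|<2(n-|S_0|)/3$, and $B$ lies in the same range.

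Finally I would pad $S_0$ to size exactly $k+1$ (if it is smaller) by transferring $k+1-|S_0|$ vertices from $A\cup B$ into $S$, splitting them between $A$ and $B$ so that the final $|A|$ and $|B|$ sit in $[(n-k-1)/3,\,2(n-k-1)/3]$. I expect this last bookkeeping to be the main technical nuisance: one has to check that the feasibility interval for the number of vertices taken from each side is non-empty, which boils down to two linear inequalities that both reduce to $k\le n-1$ and hence follow from the hypothesis $k\le n-4$ (the slack $n-k-1\ge 3$ also ensures the bounds $(n-k-1)/3$ and $2(n-k-1)/3$ leave at least one vertex in each part).
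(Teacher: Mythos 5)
The paper does not prove this lemma; it is imported verbatim from Kloks~\cite{k1}, so your proposal has to stand on its own. It does not: the centroid step is the heart of the argument and it is false as stated. There need not exist a node $t^*$ with $n_{t^*,t'}\le (n-|X_{t^*}|)/2$ for every neighbour $t'$. Take a tree decomposition consisting of a single edge $tt'$ with bags $X_t,X_{t'}$ neither contained in the other; then $V=X_t\cup X_{t'}$, so $n-|X_t|=|X_{t'}\setminus X_t|=n_{t,t'}$ and the condition at $t$ reads $n_{t,t'}\le n_{t,t'}/2$, which fails, and symmetrically at $t'$ (e.g.\ two bags of size $(n+2)/2$ sharing two vertices, which is compatible with $k\le n-4$). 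Correspondingly, your pivoting walk just oscillates along such an edge: "the heavy branch strictly decreases" is not justified, because after moving from $t$ to $t'$ the heavy branch at $t'$ can be the one pointing back to $t$. The correct and standard centroid statement (orient $tt'$ towards $t'$ when the $t'$-side outweighs \emph{half of all of $V(G)$}; a two-way orientation is then impossible because the two sides are disjoint) only yields components of $G-X_{t^*}$ of size at most $n/2$, and $n/2\le \tfrac23(n-|S_0|)$ requires $|S_0|\le n/4$, which is not available when $k$ is large. So the weaker, true bound does not rescue your grouping step either.

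A second symptom of the same problem is that your argument never uses the hypothesis $k\le n-4$ in an essential way (you claim the padding inequalities "reduce to $k\le n-1$"), yet the lemma is false for $k=n-2$, so the hypothesis must bite somewhere. The standard repair, which is essentially Kloks' route, is to first replace the decomposition by a \emph{smooth} one in which every bag has size exactly $k+1$ and adjacent bags share exactly $k$ vertices (possible since $G$ is a subgraph of a $k$-tree). Then $T$ has exactly $n-k$ nodes, each component $T_i$ of $T-t^*$ with $m_i$ nodes contributes exactly $m_i$ vertices outside $X_{t^*}$, and an ordinary tree centroid of $T$ gives $|V_i|=m_i\le (n-k)/2$ with $\sum_i |V_i|=n-k-1$. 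Your two-case grouping then works, but the large-component case needs $(n-k)/2\le \tfrac23(n-k-1)$, which is exactly the inequality $k\le n-4$; no padding of $S$ is needed since $|X_{t^*}|=k+1$ already. I would also make explicit that $X_{t^*}$ separates the $V_i$ from one another, since the whole point of the partition $(A,S,B)$ in the paper's applications is that no edge joins $A$ to $B$.
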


 We will often use the non existence of a balanced partition to provide
lower bounds on the
tree--width of $G$.



\section{Tree--depth of dense random graphs}\label{sec:dense}

This section is devoted to prove Theorem~\ref{the:dense}.
\vspace{0.5cm}

{\scshape Proof of Theorem~\ref{the:dense}. }
We only need to prove the lower bound. It will be derived from the analogous one for
tree--width through inequality~\eqref{eq:tdtw}. For this we will show that a random graph
$G\in\mathcal{G}(n,p)$ with $p\gg n^{-1}$ does not contain a balanced separator of order less than
$n-3\sqrt{\tfrac{\ln{3}}{2}}\sqrt{\tfrac{n}{p}}$. The result will follow
from Lemma~\ref{lem:balanced}.

%

 Assume $p=c(n)/n$ and set any function $f(c)> 3\sqrt{\tfrac{\ln 3}{2c}}$.
Suppose that there exist a balanced $k$--partition $(A,S,B)$ of $G$ with 
$k\leq (1-f(c)) n$. By definition of
balanced $k$--partitions, we have
$|S|=k+1, \mbox{ and } |A|,|B|\geq\tfrac{f(c)n}{3}$, so that
$$
|A|\cdot|B|\ge \frac{2f(c)^2}{9}n^2.
$$
Let $X_{(A,S,B)}$ denote the event that $(A,S,B)$ is a balanced $k$--partition of $G$ with
$k\leq (1-f(c)) n$.
 We have
\begin{equation}
 \Pr\left(X_{(A,S,B)}\right)=\left(1-p\right)^{|A||B|} \leq
 \left(1-p\right)^{\frac{2f(c)^2}{9}n^2}.
\end{equation}

Let $\mathcal{B}$ denote the set of balanced $k$--partitions with $k\le (1-f(c)) n$. By using the
trivial bound $|\mathcal{B}|\le 3^n$, the
number of labeled partitions of $[n]$ into three sets, we get:

\begin{eqnarray}
\label{eq:bound}
\Pr\left(\cup_{(A,S,B)\in \mathcal{B}} X_{(A,S,B)}\right)
&\leq& \displaystyle\sum_{(A,S,B)\in \mathcal{B}} \Pr\left( X_{(A,S,B)}\right)\nonumber \\
&\leq& 3^n\left(1-p\right)^{\frac{2f(c)^2}{9}n^2}\nonumber\\
&\leq& \exp\left\{(\ln{3})n-\frac{2f(c)^2}{9}pn^2\right\}\\
&\rightarrow& 0\;\; (n\to \infty)\nonumber,
\end{eqnarray}
where we have used that $1-x\leq e^{-x}$ and $f(c)>3 \sqrt{\tfrac{\ln 3}{2c}}$.


Since there is no set of size $(1-f(c))n$ separating $G$, from inequality \eqref{eq:tdtw} and 
Lemma~\ref{lem:balanced} we have $\td(G)\geq\tw(G)> (1-f(c)) n$.
The above inequality is valid for all $f(c)> 3\sqrt{\frac{\ln 3}{2c}} $ and thus, we have
$\td(G)\geq n-O(\sqrt{n/p})$.

\vspace{0.3cm}

Observe that the above argument can be used to deduce that the tree--width of
sparse random graphs with $p(n)=c/n$ is linear in $n$ for sufficiently large $c$ and
obtain a lower bound for the constant. From~\eqref{eq:bound} we need
$$
f(c)>\sqrt{\frac{9\ln{3}}{2c}}
$$
and since the treewidth is $(1-f(c))n$, we also need $f(c)<1$. This two conditions imply that for
any $c>\frac{9\ln{3}}{2}\approx 4.94$, one has that
\begin{equation*}
 \td(\mathcal{G}(n,c/n))\geq \tw(\mathcal{G}(n,c/n))\geq (1-f(c))n = \Omega(n).
\end{equation*}
In the next section we will prove that a.a.s. $\td(\mathcal{G}(n,c/n))$ is linear for any $c>1$.


\section{Tree--depth of sparse random graphs}\label{sec:sparse}

In this section Theorem~\ref{the:sparse} will be proved. The proof is divided in three cases
depending on the value of $c$.

\subsection{Proof of Theorem~\ref{the:sparse}.1}
\label{ssc:c<1}

Let $G\in \mathcal{G}(n,p=c/n)$ with $0<c<1$. Our goal is to show that
$\td(G)=\Theta(\log\log n)$.

 We will first prove the upper bound.
A \emph{unicyclic} graph (\emph{unicycle}) is a connected graph that has the same number of
vertices than edges, that is, the graph contains exactly one cycle.

\begin{lemma}
\label{lem:pseudo}
 If each connected component of $G$ is either a tree or a unicycle, then $\td(G)\leq
\log n_c + 2$, where
$n_c$ is the cardinality of the largest connected component of $G$.
\end{lemma}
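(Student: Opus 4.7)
The plan is to reduce the lemma to the case of a single connected component via equation~(\ref{eq:cc}), and then treat trees and unicycles separately, the unicycle case being the only one that needs a small trick.

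First, by~(\ref{eq:cc}) it suffices to show that each connected component $H$ of $G$ has $\td(H) \le \log n_c + 2$, where we may assume $|V(H)| \le n_c$. If $H$ is a tree, inequality~(\ref{eq:trees}) gives directly $\td(H) \le \lfloor \log |V(H)| \rfloor + 1 \le \log n_c + 1$, which is within the bound with room to spare. So the whole content of the lemma is in the unicyclic case.

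Now suppose $H$ is a unicycle with $n_H \le n_c$ vertices, and let $v$ be any vertex on its unique cycle $C$. Deleting $v$ from $H$ destroys the cycle, so $H \setminus v$ is acyclic and hence a forest on at most $n_H - 1$ vertices. Build an elimination forest $T$ for $H$ by rooting a tree at $v$ and attaching, as the subtrees below $v$, optimal elimination forests of the connected components of $H \setminus v$. Since $v$ is an ancestor of every other vertex of $H$ in $T$, all edges of $H$ incident to $v$ are covered by the closure; the remaining edges of $H$ lie in $H \setminus v$ and are covered by induction. Thus
\begin{equation*}
\td(H) \le 1 + \td(H \setminus v) \le 1 + \bigl(\lfloor \log (n_H - 1) \rfloor + 1\bigr) \le \log n_c + 2,
\end{equation*}
where the middle inequality uses~(\ref{eq:trees}) applied componentwise to the forest $H \setminus v$. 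This gives the desired bound on all components and hence on $G$.

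The only step that required thought was the unicyclic case, and even there the obstacle is mild: one just needs to notice that removing one cycle vertex turns a unicycle into a forest, paying an additive $+1$ in the tree-depth. No step is genuinely hard; the lemma is essentially bookkeeping built on top of~(\ref{eq:cc}) and~(\ref{eq:trees}).
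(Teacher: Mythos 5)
Your proof is correct and follows essentially the same route as the paper: reduce to components via~\eqref{eq:cc}, and for a unicyclic component delete one cycle vertex to obtain a forest, paying an additive $1$ (the paper phrases this via~\eqref{eq:decreasing}) before applying the tree bound~\eqref{eq:trees}. Your write-up is just a more explicit version of the paper's two-line argument.
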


\begin{proof} As it has been remarked, $\td (G)$ equals the tree--depth of the largest connected
component of the graph. After deleting at most one vertex, a connected component becomes a tree. The
result follows by using~\eqref{eq:decreasing} and ~\eqref{eq:trees}.

\end{proof}

One of the central results of Erd{\H{o}}s and R{\'e}nyi~\cite{er1} states that,
if $0<c<1$, then $G$ is composed by trees and unicycles.
Moreover the order of the largest component in the random graph is $\Theta(\log n)$ (see
e.g.~\cite{b1}).
Therefore, $n_c =\Theta (\log n)$ and, by
Lemma~\ref{lem:pseudo}, we have
\begin{equation*}
 \td(G) = O(\log\log n).
\end{equation*}

We next show the lower bound. 
Recall that the diameter of the graph provides a lower bound on the tree--depth of $G$ by
Eq.~\eqref{eq:diam}.
Hence our proof for this case will be completed if we show that the random graph $G$ a.a.s. contains
a tree $T$ of order $\Theta (\log n)$ and sufficiently large diameter.

Observe that every labeled tree on $k$ vertices has the same probability to appear in $G$ as a
connected component, since each tree with fixed order contains the same number of edges. R{\'e}nyi
and Szekeres~\cite{rs1} proved that the expected height $H_k$
 of a random labeled tree on $k$ vertices satisfies
\begin{equation*}
 \mathbb{E}(H_k)\sim \sqrt{2\pi k} \quad \mbox{and}
 \quad
\mathbf{Var}(H_k)\sim \frac{\pi(\pi-3)}{3}k.
\end{equation*}
Hence the diameter $D_k$ of a labelled tree with $k$ vertices satisfies $H_k\leq D_k\leq 2H_k$ and
an
thus, $\mathbb{E}(D_k)=\Theta(\sqrt{k})$. Unfortunately we have that $\Var(D_k)=\Theta(k)$ and we
can not show that the random variable $D_k$ is highly concentrated around its expected value. The
diameter of an individual tree can be even constant as $k\to \infty$. However, the number of
trees with $k$ vertices in $G$ is large enough to ensure that a.a.s. there is at least one with
sufficiently large diameter.

To count the number of trees of each size it is better to use the random model $\mathcal{G}(n,m(n))$
where a graph is chosen uniformly at random from all the labeled graphs with $m(n)$ edges.
Erd{\H{o}}s and R{\'e}nyi~\cite{er1}
proved that if $m(n)= \Omega(n)$, the random variable $X_k$ counting the number of trees of order
$k$ in $\mathcal{G}(n,m(n))$ follows a normal distribution with expected value and variance $M_k$,
where
\begin{equation*}
 M_k = n \frac{k^{k-2}}{k!}\left( \frac{2m}{n}\right) ^{k-1} e^{-\frac{2km}{n}}.
\end{equation*}

Moving back to the random graph model $\mathcal{G}(n,p)$ with $p=c/n$, and
noting that $\mathbb{E}(m)=\frac{cn}{2}$, we get the analogous
result, where now
\begin{equation}
\label{eq:mn}
 M_k = n \frac{k^{k-2}}{k!} c ^{k-1} e^{-kc}.
\end{equation}

We are interested in $X=X_{\log{n}}$, the number of trees of order $\log{n}$, for which,
\begin{equation*}
 M=M_{\log{n}}= \frac{n^{\log\log n -\alpha}}{c(\log^2 n)(\log n)!}
\end{equation*}
where $\alpha= c-1-\log c$. Observe that $M\rightarrow\infty$ when $n\to
\infty$, i.e. we expect infinitely many trees of size $\log{n}$.
Chebyshev's inequality with $\E(X)=\Var(X)= M$
 ensures that a.a.s the number of tree components with order $\log n$ is $X =
(1-o(1))M \rightarrow\infty \;\; (n\to \infty)$.

Denote by $D= D_{\log{n}}$ and define $\overline{D}$ to be the mean diameter over all the components
of order $\log n$. Clearly $\E(\overline{D})= \E(D)=\Theta(\sqrt{\log n})$, but now, since the
diameter of each individual tree is independent from the other ones, the variance of $\overline{D}$
is smaller. In fact, $\Var(\overline{D})=o(\log n)$. Hence, by using Chebyshev inequality on
$\overline{D}$, we can ensure that $\overline{D}=\Theta(\sqrt{\log n})$ a.a.s. and there exists
a tree $T$ in $G$ with diameter $d=\Omega (\sqrt{\log n})$. Hence, by \eqref{eq:diam},
\begin{equation*}
 \td(G)=\Omega(\log d) =\Omega \left(\log {\left(\sqrt{\log n}\right)}\right) =\Omega (\log
\log n).
\end{equation*}

This completes the proof of this case. 


\subsection{Proof of Theorem~\ref{the:sparse}.2}\label{ssc:c=1}

Now we look at the critical point where $c=1$.

It was showed by Erd\H{o}s and R\'enyi also in~\cite{er1} that $p=1/n$ is the threshold
probability for the existence of a polynomial size component, the so called \emph{giant component}
of the random graph. For this particular probability, the largest component has order $O(n^{2/3})$.

We first prove the upper bound. We show that each component is similar to a
tree. For convenience we use the definition of a $(k,\ell)$-component given in~\cite{jlr1}.
A \emph{$(k,\ell)$-component}, $\ell\geq -1$, is a connected component with $k$ vertices and
$k+\ell$ edges. Thus, $(k,-1)$--components are trees and $(k,0)$--components correspond to
unicyclic graphs. A \emph{complex component} is a $(k,\ell)$--component with $\ell>0$.


\begin{proposition}\label{prop:excess}
	 All complex components of $G(n,1/n)$ have constant excess $\ell$.
\end{proposition}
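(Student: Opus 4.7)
The plan is a first-moment argument on $(k, k+\ell)$-components. Let $X_{k,\ell}$ be the number of such components in $G \in \mathcal{G}(n, 1/n)$. By Markov's inequality, $\Pr[\max_{\text{complex}}\ell \geq L] \leq \sum_k \sum_{\ell \geq L} \E[X_{k,\ell}]$, so the proposition follows once I show this double sum tends to $0$ as $L \to \infty$, uniformly in $n$.

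To estimate $\E[X_{k,\ell}]$ I would start from the exact identity
\[
\E[X_{k,\ell}] = \binom{n}{k}\, C(k, k+\ell)\, p^{k+\ell}\, (1-p)^{k(n-k)+\binom{k}{2}-(k+\ell)},
\]
where $C(k, m)$ is the number of connected labelled graphs on $k$ vertices with $m$ edges. I would feed in Wright's classical asymptotic $C(k, k+\ell) \sim \rho_\ell\, k^{k-1/2+3\ell/2}$ (for each fixed $\ell$ as $k \to \infty$); the quantitative feature I really need is that $\rho_\ell$ decays super-exponentially with $\ell$, which will ultimately drive the tail bound. Keeping the $e^{-k^2/(2n)}$ correction in $\binom{n}{k} \sim (n^k/k!)\, e^{-k^2/(2n)+o(1)}$ lets this factor cancel against its counterpart coming from $(1-p)^{k(n-k)+\binom{k}{2}-(k+\ell)} \sim e^{-k+k^2/(2n)}$, leaving a clean bound
\[
\E[X_{k,\ell}] = O\!\left(\rho_\ell\, k^{3\ell/2 - 1} n^{-\ell}\right)
\]
in the range $k = O(n^{2/3})$.

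To carry out the summation I would restrict to the critical window, using the classical Erd{\H{o}}s--R{\'e}nyi result (stated earlier in the excerpt) that the largest component of $G(n,1/n)$ has order $O_p(n^{2/3})$: given $\varepsilon > 0$, pick $B = B(\varepsilon)$ with $\Pr[|C_1| > B n^{2/3}] < \varepsilon$, so contributions from $k > Bn^{2/3}$ can be discarded up to an $\varepsilon$ error. Substituting $k = \alpha n^{2/3}$ and integrating $\alpha$ over $[0, B]$ gives $\sum_{k \leq B n^{2/3}} \E[X_{k,\ell}] = O(\rho_\ell B^{3\ell/2}/\ell)$; the super-exponential decay of $\rho_\ell$ overwhelms the polynomial growth $B^{3\ell/2}$ for any fixed $B$, and summing over $\ell \geq L$ then tends to $0$ as $L \to \infty$.

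The main obstacle is the delicate cancellation in the critical window $k = \Theta(n^{2/3})$, where $\binom{n}{k}$, $(1-p)^{k(n-k)}$, and $(1-p)^{\binom{k}{2}-(k+\ell)}$ each contribute a factor of the form $e^{\pm k^2/(2n)}$: a sloppy estimate, such as $\binom{n}{k} \leq n^k/k!$ with no further correction, leaves an uncompensated $e^{k^2/(2n)} = e^{\Theta(n^{1/3})}$ that would destroy the first-moment bound outright. Only once all three contributions are tracked to leading order does Wright's combinatorial decay of $\rho_\ell$ in $\ell$ take over and finish the argument.
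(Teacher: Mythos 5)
Your proposal follows essentially the same route as the paper: a first-moment bound on the number of $(k,\ell)$-components, an upper bound on the number of connected labelled graphs with $k$ vertices and excess $\ell$ whose super-exponential decay in $\ell$ drives the tail sum, restriction to $k=O(n^{2/3})$ via the known order of the largest component, and Markov's inequality. The only cosmetic difference is that you invoke Wright's fixed-$\ell$ asymptotic for $C(k,k+\ell)$ where the paper uses Bollob\'as's bound $C(k,\ell)=O\left(\ell^{-\ell/2}k^{k+(3\ell-1)/2}\right)$, which is uniform in $k$ and $\ell$ and hence directly usable in the sum over $\ell$ (you correctly flag that this uniformity is the feature actually needed); your point about tracking the $e^{\pm k^2/(2n)}$ cancellation is valid and is treated rather tersely in the paper's $e^{-k(1+o(1))}$ step.
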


\begin{proof}
Let $Y(k,\ell)$ denote the number of $(k,\ell)$-components of $G(n,1/n)$. The expected value of
$Y(k,\ell)$ is

$$
\mathbb{E} (Y(k,\ell)) = \binom{n}{k} C(k,\ell) \left(\frac{1}{n}\right)^{k+\ell}
\left(1-\frac{1}{n}\right)^{\binom{k}{2}-(k+\ell) +k(n-k)} \leq \frac{C(k,\ell)}{n^{\ell}
k!}e^{-k(1+o(1))},
$$
where $C(k,\ell)$ is the number of connected labeled graphs with $k$ vertices and $k+\ell$ edges and
$k/n\to 0$.
B{\'o}llobas~\cite[Corollary 5.21]{b1} obtained the following sharp bound for $C(k,\ell)$,
$$
C(k,\ell)\leq O\left(\ell^{-\ell/2} k^{k+(3\ell-1)/2}\right).
$$
By combining the previous expressions and using Stirling formula, for $k=O(n^{2/3})$ one gets
$$
\mathbb{E} (Y(k,\ell)) \leq \frac{\ell^{-\ell/2}}{k}\left(\frac{k^{3/2}}{n}\right)^{\ell} \leq
O\left(\frac{\ell ^{-\ell/2}}{n^{2/3}}\right),
$$
if $\ell \geq 2$.

Let $Y(\ell)=\sum_{k}Y(k,\ell)$ denote the total number of connected components with excess $\ell$.
Since connected components have order at most $O(n^{2/3})$, we have
$$
\mathbb{E} (Y(\ell)) = \displaystyle\sum_{k=0}^{O(n^{2/3})} \mathbb{E} (Y(k,\ell)) \leq
 O\left(\ell ^{-\ell/2}\right).
$$
Let $Y_K=\sum_{\ell\ge K^2} Y(\ell)$ denote the total number of complex components with excess at
least $K^2$. We have
$$
\mathbb{E}(Y_K)=\sum_{\ell\ge K^2}
\mathbb{E} (Y(\ell))\leq \sum_{\ell\ge K^2}O\left( \ell^{-\ell/2}\right) \leq
\sum_{\ell\ge K^2} O\left(\frac{1}{K^\ell}\right)=O\left(\frac{1}{(K-1)K^{K^2-1}}\right),
$$
Using the Markov inequality we have that

$$
\Pr(Y_K\geq 1)\leq \E(Y_K)\to 0 \;\;(K\to \infty).
$$

This implies that a.a.s. there are no $\ell$-components with $\ell=\omega(1)$, i.e. the excess of
all the components is constant when $p=1/n$.

\end{proof}

Each component $C$ contains
$k=O(n^{2/3})$ vertices and, by Proposition~\ref{prop:excess}, $k+\ell(C)$ edges with
$\ell(C)=O(1)$. Note that we can delete $\ell(C)$
vertices, turning $C$ into a tree of order
$(k-\ell(C))=O(n^{2/3})$. Let $\ell_m$ be the maximum excess of edges among all the components. The
tree--depth satisfies
\begin{equation*}
  \td(G)\leq \ell_m + O\left(\log \left(n^{2/3}\right)\right)= O(1) + O\left(\frac{2}{3}\log
n\right)= O(\log n),
\end{equation*}
giving the upper bound.
\vspace{0.3cm}

We note that the same argument can be applied to give a constant upper bound for the
tree--width of $G(n,1/n)$,
\begin{proposition}\label{prop:width}
 Let $G\in \mathcal{G}(n,p=1/n)$. Then,
\begin{equation*}
 \tw(G) = O(1).
\end{equation*}
\end{proposition}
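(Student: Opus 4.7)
The plan is to follow the same skeleton as the tree--depth upper bound above, but exploit the much stronger fact that a tree has tree--width exactly $1$ rather than tree--depth $\Theta(\log n)$.

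First, I would appeal to Proposition~\ref{prop:excess} to conclude that, with probability as close to $1$ as we like, the maximum excess $\ell_m$ over all complex components of $G$ is bounded by an absolute constant. More precisely, the bound $\E(Y_K)\to 0$ as $K\to\infty$ established in that proof (uniformly in $n$) shows that $\ell_m$ is stochastically bounded: for every $\varepsilon>0$ there is a constant $B$ with $\Pr(\ell_m\le B)\ge 1-\varepsilon$ for all sufficiently large $n$.

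The core step is a deterministic lemma: for every connected graph $C$ with excess $\ell(C)\ge -1$, one has $\tw(C)\le \ell(C)+2$. To prove this I would build a small feedback vertex set greedily, iteratively picking a vertex $v$ that lies on some cycle of the current graph and deleting it. A short counting argument shows that such a deletion strictly decreases the cyclomatic number $\mu=m-n+c$: if $\deg(v)=d\ge 2$ and $v$ lies on a cycle, then two of its neighbors remain in a common component of the graph minus $v$, so $c$ grows by at most $d-2$, while $m$ drops by $d$ and $n$ by $1$, giving $\Delta\mu\le -1$. Starting from $\mu(C)=\ell(C)+1$, after at most $\ell(C)+1$ deletions the remainder is a forest $F$. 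Since $\tw(F)=1$, inserting the $\le \ell(C)+1$ deleted vertices into every bag of a tree decomposition of $F$ produces a tree decomposition of $C$ of width at most $\ell(C)+2$.

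Finally, using the standard fact that the tree--width of a graph equals the maximum tree--width of its connected components (exactly analogous to~\eqref{eq:cc}), the two steps combine to give $\tw(G)\le \ell_m+2=O(1)$ a.a.s. The only subtle point I expect is the cyclomatic--drop computation underlying the greedy feedback--vertex bound; everything else is a direct transfer from the tree--depth argument preceding the statement.
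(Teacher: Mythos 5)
Your proposal is correct and follows essentially the same route as the paper: the probabilistic input is identical (Proposition~\ref{prop:excess} bounding the maximum excess $\ell_m$), and the conclusion $\tw(G)\le \ell_m+O(1)$ comes from viewing each component as a forest plus a constant number of modifications. The only difference is in the deterministic step --- you delete a feedback vertex set of size at most $\ell(C)+1$ and insert it into every bag, whereas the paper simply notes that adding an edge increases the tree--width by at most one; both give the same constant bound, and your cyclomatic--drop computation is sound.
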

\begin{proof}
The giant component of $G$ have order $O(n^{2/3})$. By Proposition~\ref{prop:excess}, all the
components have constant excess $\ell$. Note that adding one edge to a graph increases its
tree--width by at most one unit and that the tree--width of a tree is one.
Hence, denoting by $\ell_m$ the maximum excess of components in $G$, we have
$
\tw (G)\le 1+\ell_m (G)=O(1).
$

\end{proof}

Observe that the previous proposition applies to any width parameter $\w(G)$ which can be
upper bounded by a
function of the tree--width. Examples of such parameters are branch--width, path--width,
rank--width or clique--width.

 Note also that for this probability, $p=1/n$, the latter inequality of~\eqref{eq:tdtw}
is asymptotically
tight.

The lower bound is obtained by an argument on the diameter of a giant component.
Each giant component $C$ is a tree decorated with a constant number $\ell(C)$ of extra
edges (see e.g.~\cite{b1}). Observe that adding an edge to a connected component can at most
halve its
diameter. Since the expected diameter of a random tree of size $k$
is $\Theta(\sqrt{k})$, it follows from Proposition~\ref{prop:excess} that the expected diameter of a
giant component is $\Theta(n^{1/3})$. Here, however, there are too few giant components, and we can
not use the same argument as for the previous
case $c<1$ to conclude that the graph contains a giant component with the expected
value of the diameter. The concentration of this random variable follows from a
more general statement due to Nachmias and Peres~\cite{np1} on the
diameter of the largest component of a random graph with $p=1/n$.

\begin{theorem}[\cite{np1}]\label{thm:np} Let $C$ be the largest component of a
random graph in $\mathcal{G}(n,1/n)$. Then, for any
$\varepsilon>0$, there exists $A=A(\varepsilon)$ such that
  \begin{equation*}
    \Pr(\diam(C)\notin (A^{-1}n^{1/3}, An^{1/3}) ) <
    \varepsilon.
  \end{equation*}
\end{theorem}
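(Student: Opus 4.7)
The plan is to analyse a breadth-first search (BFS) exploration of the components of $G\in\mathcal{G}(n,1/n)$ and translate the question on the diameter into a question on a near-critical random walk. Order the vertices in a BFS queue rooted at a vertex $v$: at each step, expose the next active vertex by testing its remaining potential neighbours independently with probability $1/n$, and append the newly discovered neighbours to the queue. Let $Y_t$ be the number of active vertices after $t$ explorations and let $\eta(v)$ be the BFS depth from $v$ inside its component $C(v)$. Since $\diam(C(v))\le 2\max_{u\in C(v)}\eta(u)$ and $\eta(v)\le \diam(C(v))$, it suffices to prove the two-sided concentration on scale $n^{1/3}$ for the depth of the BFS tree of the largest component.

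For the upper bound, I would fix $v$ and observe that $|L_j(v)|$, the number of vertices at distance exactly $j$ from $v$, is stochastically dominated by the $j$-th generation of a critical branching process with offspring distribution $\Bin(n,1/n)$, which converges to $\mathrm{Poisson}(1)$. Kolmogorov's estimate for critical branching processes gives survival probability $\Theta(1/j)$ up to generation $j$. To move from this ``typical'' bound to a uniform one, I would work instead with the increments $Y_t-Y_{t-1}=\Bin(n-t-Y_{t-1},1/n)-1$ and note that, conditionally on the walk staying positive, this has drift $-t/n+O(Y_{t-1}/n)$. The drift becomes comparable with the standard deviation $\sqrt{t}$ precisely at $t\asymp n^{2/3}$, so standard hitting-time estimates for biased random walks force $Y_t$ to return to $0$ within $O(n^{1/3})$ depth-steps of the exploration with probability tending to $1$. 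A union bound over the at most $n$ starting vertices $v$ then gives $\diam(C)\le An^{1/3}$ for $A=A(\varepsilon)$ large enough.

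For the lower bound, I would restrict attention to the largest component $C$, which by the classical Erd\H{o}s--R\'enyi results has $|C|=\Theta(n^{2/3})$ with probability bounded below. By Proposition~\ref{prop:excess} one may condition on the excess $\ell(C)$ being a fixed constant, so $C$ is distributed as a uniformly chosen connected graph on $k=\Theta(n^{2/3})$ labelled vertices with $k+O(1)$ edges: i.e.\ a uniform random labelled tree on $k$ vertices decorated by $O(1)$ extra edges. R\'enyi--Szekeres give $\mathbb{E}(H_k)=\Theta(\sqrt{k})=\Theta(n^{1/3})$ for the height of such a tree, with variance also $\Theta(k)$. Adding $O(1)$ extra edges can at most reduce the diameter by a constant factor, so $\diam(C)\ge c n^{1/3}$ holds with probability bounded below. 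To upgrade this to probability $1-\varepsilon$ one applies the same argument to the $\Theta(1)$ largest components (all of order $\Theta(n^{2/3})$), whose diameters are not quite independent but sufficiently weakly correlated that the maximum is at least $A^{-1}n^{1/3}$ with probability at least $1-\varepsilon$.

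The main obstacle is the upper bound. Getting a clean tail estimate on the BFS depth requires careful control of the random walk $Y_t$ near its zero-crossings, where the approximation by an unconditioned walk breaks down; the natural tool is a comparison with Brownian motion on scale $n^{1/3}$ (equivalently, a parabolic Airy process), and making this coupling rigorous is the technically hardest step and probably the reason why this result required a dedicated paper rather than a short argument. A secondary difficulty is the promotion of ``probability bounded below'' to ``probability at least $1-\varepsilon$'' for the lower bound, which needs a multi-component argument or the sharp result, also due to Aldous, that the rescaled component sizes converge jointly to the lengths of excursions of a parabolically drifting Brownian motion.
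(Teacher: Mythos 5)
This statement is not proved in the paper at all: it is quoted verbatim from Nachmias and Peres~\cite{np1} and used as a black box to get the lower bound $\td(G)=\Omega(\log n)$ at $c=1$. So there is no internal proof to compare against, and the real question is whether your sketch would stand on its own. It would not, for the following concrete reason.

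The upper bound is where your argument breaks. You correctly recall Kolmogorov's estimate: the probability that the exploration from a fixed vertex $v$ survives to depth $j$ is $\Theta(1/j)$, so the probability that the component of $v$ has radius at least $An^{1/3}$ is $\Theta\bigl(1/(An^{1/3})\bigr)$. A union bound over the $n$ starting vertices therefore gives an expected number of ``bad'' vertices of order $n\cdot n^{-1/3}/A = n^{2/3}/A$, which diverges --- and this is not an artifact of the method: there really are $\Theta(n^{2/3})$ vertices sitting in components of radius $\Theta(n^{1/3})$, namely the vertices of the largest components themselves. So the first-moment/union-bound step as you state it cannot yield $\Pr(\diam(C)>An^{1/3})<\varepsilon$; one needs a genuinely different accounting (e.g.\ bounding the expected number of long \emph{paths} or induced long shortest paths, or conditioning on the component structure as Nachmias--Peres do), and this is precisely the part you defer to an unproven Brownian coupling. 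The lower bound half is closer to workable --- conditioning on the excess and the size of $C_1$, the component is a uniform tree plus $O(1)$ edges, and the rescaled height of a uniform tree on $k$ vertices converges in distribution to a multiple of the maximum of a Brownian excursion, which is almost surely positive, so you get $\Pr(\diam(C_1)<A^{-1}n^{1/3})<\varepsilon$ directly without the multi-component patch --- but it still leans on Aldous-type control of $|C_1|$ that you only gesture at. In short: the heuristic scaling picture is right, but the key quantitative step of the upper bound is wrong as stated, which is consistent with the fact that the paper imports this theorem rather than proving it.
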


By using the Theorem~\ref{thm:np} and \eqref{eq:diam}, we get $$\td(G)=
\Omega(\log n^{1/3}) =
\Omega(\log n).$$ This concludes the proof of the case $c=1$.


\subsection{Proof of Theorem~\ref{the:sparse}.3}\label{ssc:c>1}

On the range of $p=c/n$ for $c>1$, we will prove that the tree--width is linear a.a.s., implying
the same result for the tree--depth.

We have already seen at the end of Section~\ref{sec:dense} that the tree--width of
$\mathcal{G}(n,c/n)$ is linear for any $c>4.94$.

Kloks~\cite{k1} studied the tree--width of a random graph $G$ with $p=c/n$ and proved that it is
linear for any $c>2.36$. Gao~\cite{g3} showed that the lower bound can be improved to 
$c>2.162$, and
conjectured that the threshold for having linear tree--width occurs for some $1<c<2$.
As a side result of their study on the rankwidth of random graphs, Lee, Lee and
Oum~\cite{llo1} settled the conjecture by showing that the
tree--width is linear for any $c>1$.

Here we will give a more direct proof of the linearity of tree--width for random graphs with $c>1$
 which provides an explicit lower bound for the linear constant. Our approach uses the same deep
result of Benjamini, Kozma and Wormald, Theorem~\ref{thm:expander} below,
as in~\cite{llo1}.

Recall that the (edgewise) Cheeger constant of a graph $G$ is defined as

\begin{equation}\label{eq:cheeger}
 \Phi(G) =\displaystyle\min_{\substack{ X\subseteq V\\ 0<|X|\leq n/2 }} \frac{e(X,V\backslash
X)}{e(X,V)}
\end{equation}
where $e(X,Y)=\sum_{x\in X} \deg_Y(x)$.

For any fixed $\alpha>0$, $G$ is an {\em $\alpha$--edge-expander} if
$\Phi(G)>\alpha$.



The recent proof of Benjamini, Kozma and Wormald~\cite{bkw1} for the value of the
mixing time of a random walk on the giant component of a random
graph with $p=c/n$, $c>1$, relies on the existence of an
$\alpha$--edge-expander connected subgraph of linear size in the giant component.
Theorem~\ref{thm:expander} below is a direct consequence of
~\cite[Theorem 4.2]{bkw1}, which ensures the existence a.a.s. of a certain
subgraph $R_N(G)$  of the giant component which is an $\alpha$--edge-expander for
some sufficiently small $\alpha$ (which has some additional properties.)
The fact that this subgraph has linear order arises in the proof of
this theorem (see~\cite[page 19]{bkw1}.) We note that in the above mentioned
paper the authors use a different formulation of the Cheeger constant which can be easily
shown to be equivalent to \eqref{eq:cheeger}; see e.g.~\cite{llo1}.

\begin{theorem}[\cite{bkw1}]\label{thm:expander} Let $G$ be a random graph in $\mathcal{G}(n,p)$
with $p=c/n$, $c>1$. There exist $\alpha, \delta >0$ and a subgraph
$H\subseteq G$ such that $H$ is an $\alpha$--edge-expander and $|V(H)|=\delta
n$.
\end{theorem}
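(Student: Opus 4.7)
Since the authors invoke this as a black box from~\cite{bkw1}, my plan is only to sketch the broad strategy. The idea in one line: extract a sparse ``skeleton'' of the giant component that has no small bottleneck cuts by construction, and then use a small reservoir of independent edges to handle all larger cuts.

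First I would use the classical structural description of the supercritical regime. For $c>1$, $G(n,c/n)$ a.a.s.\ has a unique giant component of size $\gamma n$ with $2$-core of linear size $\delta_0 n$ for some $\delta_0=\delta_0(c)>0$. The $2$-core itself is not an edge-expander: long induced paths of degree-two vertices produce cuts of Cheeger ratio $O(1/n)$. Suppressing these chains yields the \emph{kernel} $K$, a random multigraph with minimum degree three and a linear number of vertices, whose degree distribution is well-controlled. Next I would run a two-round exposure (``sprinkling'') argument: decompose $c=c_1+c_2$ with $c_1>1$ and small $c_2>0$ so that $G=G_1\cup G_2$ with independent $G_i\in\mathcal{G}(n,c_i/n)$. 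Build the kernel $K$ inside $G_1$, then expose the $G_2$-edges on $V(K)$. For each $S\subseteq V(K)$ with $|S|\le|V(K)|/2$, the number of sprinkled edges in the cut is binomial with mean $\Theta(|S|(|V(K)|-|S|)/n)$; a Chernoff tail combined with a union bound over subset size yields the desired Cheeger lower bound uniformly over all sufficiently \emph{large} cuts.

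The main obstacle is the regime of very small cuts $|S|=O(1)$: there the expected number of cross-edges contributed by $G_2$ is bounded, and the Chernoff tail is too weak to beat the $\binom{|V(K)|}{|S|}$ many subsets of that size. The fix, which is the technical core of~\cite{bkw1} and corresponds to the subgraph $R_N(G)$ mentioned in the excerpt, is a deterministic pruning phase iteratively removing, to a constant depth $N$, any small piece with too few external neighbours; equivalently, one discards in advance the configurations that a sprinkling argument could not handle. Once these small bad cuts are eliminated, the sprinkled edges control everything else, and a routine check shows that the pruning only discards a sublinear fraction of vertices, so $|V(H)|=\delta n$ remains linear and $\Phi(H)>\alpha$ for some $\alpha>0$.
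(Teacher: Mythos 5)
There is nothing in the paper to compare your proposal against: the authors do not prove this statement. It is imported verbatim as a consequence of \cite[Theorem 4.2]{bkw1}, with the linearity of $|V(H)|$ extracted from the proof there (the paper explicitly points to page 19 of \cite{bkw1}). So the only question is whether your sketch could stand on its own as a proof, and it cannot. You have correctly identified the structural cast of characters in \cite{bkw1} --- giant component, $2$-core, kernel, and the iteratively pruned subgraph $R_N(G)$ --- and you correctly locate the main difficulty in the small cuts. But every genuinely hard step is deferred. The claim that the pruning phase ``only discards a sublinear fraction of vertices'' is exactly the estimate the present authors cite page 19 of \cite{bkw1} for; calling it a routine check begs the question, since the theorem's content is precisely that a \emph{linear} expander survives.

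There is also a quantitative problem with the sprinkling step as you describe it. For a set $S$ of size $s$ in a kernel on $m=\Theta(n)$ vertices, the number of sprinkled cut edges has mean $\Theta(c_2\, s\,(m-s)/n)=\Theta(c_2 s)$, while the union bound must beat $\binom{m}{s}\le (em/s)^s=\exp\bigl(s\log(em/s)\bigr)$. A Chernoff lower tail gives only $\exp(-\Theta(c_2 s))$, so you need $c_2\gtrsim \log(m/s)$, which fails not only for $s=O(1)$ but for every $s=o(n)$, and even for $s=\Theta(n)$ it forces $c_2$ to be large, in tension with keeping $c_1>1$ and $c_2$ small. This is why expansion of the kernel in \cite{bkw1} is established through the configuration model and first-moment counts over cuts exploiting minimum degree three, not by sprinkling. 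As a reconstruction of the cited argument your outline is directionally reasonable, but as a proof it has gaps at exactly the points where the cited paper does its real work, so the statement must remain, as in the paper, a black-box import from \cite{bkw1}.
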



%
%


Let $H$ be the subgraph obtained in Theorem~\ref{thm:expander}. By Theorem~\ref{lem:balanced}, there
exists a balanced partition $(A,S,B)$ in $H$, where $S$ is a vertex separator with
$|S|=\tw(H)+1$ and
$|A|\geq (\delta n-|S|)/3$.

Since $H$ is connected we have $e(A,V(H))\geq |A|$. Then,
$$
|A|\leq e(A,V(H)) \leq \frac{e(A,S)}{\alpha},
$$
implying that $e(A,S)\geq \alpha|A|\geq \alpha\frac{\delta n-|S|}{3}=\beta n$, where
$\beta=\alpha(\delta-\gamma)/3$ and $\gamma=|S|/n$.

Since $e(A,S)\le e(S, V(G))$, it suffices to show that any set $S$ of vertices with
$e(S,V(G))\geq \beta n$, must
have linear order. For this we show that there is $\gamma_0>0$, which depends only on $\alpha,
\delta$ and $c$, such that the probability that a set $S$ with $|S|\le \gamma_0 n$ satisfies
$e(S,V(G))\ge \beta n$ tends to zero when $n\to \infty$. We use a union bound on the number of sets
$S$ of size $\gamma n$, with $\gamma <\alpha\delta/(3c+\alpha)$, and the fact that $e(S,V(G))$
is a binomial random variable $Bin( \gamma n^2, c/n)$. We have,

\begin{eqnarray*}
\Pr(\exists S: \, |S|=\gamma n,\, e(S,V(G))\geq \beta n ) &\leq& \binom{n}{\gamma n}
\sum_{e=\beta n}^{\gamma n^2} \binom{\gamma n^2}{e} p^{e}(1-p)^{\gamma n^2-e}\\
 \mbox{{\small (since $\beta n>c\gamma n=\E (e(S, V(G)))$, if $\gamma
<\alpha\delta/(3c+\alpha)$)}}
&\leq& \binom{n}{\gamma n} \gamma n^2\binom{\gamma n^2}{\beta n} p^{\beta
n}(1-p)^{\gamma n^2-\beta n}\\
&\leq& \binom{n}{\gamma n} \gamma n^2\binom{\gamma n^2}{\beta n} p^{\beta
n}\\
 \mbox{{\small(since $\binom{x}{y}\leq \left(\frac{xe}{y}\right)^y$)}}&\leq& \gamma n^2
\left(\left(\frac{e}{\gamma}\right)^{\gamma}
\left(\frac{\gamma e c}{\beta}\right)^{\beta} \right)^n.\\
\end{eqnarray*}

 Since $\beta=\alpha(\delta-\gamma)/3$, the expression
$\left(\frac{e}{\gamma}\right)^{\gamma}\left(\frac{\gamma e
c}{\beta}\right)^{\beta}$
tends to $0$ when $\gamma\to 0$. Thus, there exists $0<\gamma_0<\alpha\delta/(3c+\alpha)$ such that
$\left(\frac{e}{\gamma}\right)^{\gamma}\left(\frac{\gamma e
c}{\beta}\right)^{\beta}<1$. It follows that

$$
\Pr(\exists S: \, |S|\leq\gamma_0 n,\, e(S,V(G))\geq \beta n )\to 0 \;\; (n\to \infty).
$$

Therefore the set $S$ has size at least $\gamma_0 n$ and $\tw(H)\geq \gamma_0 n$. Since the
tree--width is monotone with respect to the subgraph relation and $H\subseteq G$, we know that
$\tw(H)\leq \tw(G)$, and $\tw(G)= \Omega(n)$, concluding the proof of Theorem~\ref{the:sparse}.

\vspace{0.3cm}

Observe that it is not true in general that every set $S$ with $f(n)$ 
incident edges has size
$\Theta(f(n))$. For instance, the maximum degree of $\mathcal{G}(n,c/n)$ ($c>1$) is not
constant; see e.g.~\cite{b1}.

Moreover, it can be checked that the above argument gives
$tw(G)\geq\frac{(\alpha\delta)^2}{9e^3c^2}n$, thus providing an explicit lower bound for $tw(G)$.
In~\cite{llo1} the authors provide the lower bound $\frac{\alpha\delta}{M^2}n$, where $M$ is
constant but it is not made explicit.



\vspace{0.3cm}

We finish the paper by showing, with analogous arguments, that the tree--width and
tree--depth of
random regular graphs is also linear. We consider the configuration model $\mathcal{G}(n,d)$
as a
model of random regular graphs (see e.g.~\cite{b1}).
\begin{corollary}
For any fixed $d\geq 3$, random $d$-regular graphs have linear tree--width and linear tree--depth.
\end{corollary}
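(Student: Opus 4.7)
The plan is to mirror the argument of Section~\ref{ssc:c>1}, with a substantial simplification afforded by the deterministic degree structure of a $d$-regular graph. For $\mathcal{G}(n,c/n)$ with $c>1$, expansion was known only on a linear-sized subgraph $H$, and a separate probabilistic step was needed to convert a lower bound on $e(A,S)$ into a lower bound on $|S|$. For random regular graphs we can take $H=G$, and that conversion is replaced by the deterministic identity $e(S,V(G))=d|S|$.

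First I would invoke the classical fact that, for any fixed $d\geq 3$, a random $d$-regular graph $G\in\mathcal{G}(n,d)$ is a.a.s. an $\alpha$--edge-expander, i.e. $\Phi(G)\geq\alpha$ for some constant $\alpha=\alpha(d)>0$. This can be obtained by a first-moment computation in the configuration model (Bollob\'as), or as a consequence of Friedman's theorem on the second eigenvalue of a random regular graph; since the probability that the configuration model produces a simple graph is bounded away from $0$ for fixed $d$, the expansion property transfers to the uniform random $d$-regular simple graph.

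Given this, the rest is a streamlined replay of Section~\ref{ssc:c>1}. Suppose $\tw(G)\leq k$ with $k\leq n-4$. By Lemma~\ref{lem:balanced} there is a balanced $k$--partition $(A,S,B)$ with $|A|,|B|\geq (n-k-1)/3$; after swapping $A$ and $B$ if necessary we may assume $|A|\leq n/2$. Since $S$ separates $A$ from $B$, the edge-expansion applied to $A$ gives
\[
e(A,S)=e(A,V(G)\setminus A)\geq \alpha\,e(A,V(G))=\alpha d|A|\geq \frac{\alpha d(n-k-1)}{3},
\]
while $d$-regularity forces $e(A,S)\leq e(V(G),S)=d|S|=d(k+1)$. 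Combining these yields $k+1\geq \alpha(n-k-1)/3$, hence
\[
\tw(G)=k\geq \frac{\alpha\, n}{3+\alpha}-O(1)=\Omega(n).
\]
Applying~\eqref{eq:tdtw} gives $\td(G)\geq\tw(G)=\Omega(n)$, and the trivial upper bound $\td(G)\leq n$ finishes the claim.

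The only real obstacle is citing a version of the expansion of random $d$-regular graphs that applies in the model one wishes to work with (configuration vs. uniform simple); this is standard but worth stating cleanly. Beyond that, no new probabilistic estimate is required, because the delicate union-bound step of Section~\ref{ssc:c>1} that converts an edge-count bound into a set-size bound is trivialised by regularity.
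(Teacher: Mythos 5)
Your proposal is correct and follows essentially the same route as the paper: both establish that a random $d$-regular graph is a.a.s. an $\alpha(d)$--edge-expander via the spectral gap (Friedman--Kahn--Szemer\'edi plus the Cheeger-type inequality), then apply Lemma~\ref{lem:balanced} and use $d$-regularity to turn the expansion of $A$ into a linear lower bound on $|S|=\tw(G)+1$. The only cosmetic difference is that the paper passes through vertex expansion ($|S|\geq|N(A)\setminus A|\geq\alpha|A|$) while you compare edge counts directly ($d|S|\geq e(A,S)\geq\alpha d|A|$), which is the same inequality divided by $d$.
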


\begin{proof}
The Cheeger constant can be bounded in terms of the second smallest eigenvalue $\mu_2(G)$ of the
Laplacian matrix (see e.g.~\cite[Lemma 2.1]{c1}),
\begin{equation*}
\Phi(G)\geq \frac{\mu_2(G)}{2}
\end{equation*}
Since $G$ is $d$--regular $\mu_2(G)=d-\lambda_2(G)$ where $\lambda_2(G)$ is the second largest
eigenvalue of the Adjacency matrix.
Friedman, Kahn and Szemer\'{e}di~\cite{fks1} proved that this
eigenvalue in $d$--regular random graphs is a.a.s. $O(\sqrt{d})$. Therefore,
\begin{equation*}
\Phi(G)\geq \frac{d-O(\sqrt{d})}{2}=\alpha(d) >0
\end{equation*}
and $G\in\mathcal{G}(n,d)$ is $\alpha(d)$--edge-expander. However, any set of vertices $X$ has a
large set of neighbors, i.e. the graph is not only edge-expander but vertex-expander. If
$N(X)=\{v\in V:\, \exists u\in X,\, u\sim v\}$, then
$$|N(X)\setminus X|\ge e(X,V\setminus X)/d\ge \Phi (G)|X|\ge
\alpha (d)|X|$$
and the graph is an $\alpha(d)$--vertex-expander.

By Theorem~\ref{lem:balanced}, we know that there is a balanced partition $(A,S,B)$ of $G$, where
$S$ is a vertex separator of cardinality $\tw(G)+1$ and $|A|\geq (n-\tw(G)-1)/3$.
Since $N(A)\setminus A= S$, we have
 \begin{equation*}
  \tw(G)= |S|-1\geq \alpha(d)|A|-1\geq \alpha(d)\frac{n-\tw(G)-1}{3}-1
 \end{equation*}
where we have used the fact that the set $A$ is vertex-expander.
Thus, $\tw(G)\geq \frac{\alpha(d)(n-1)-3}{\alpha(d)+3}= \Omega(n)$.

\end{proof}

\bibliographystyle{amsplain}
\bibliography{bibliografia}

\end{document}